\newtheorem{theorem}{Theorem}
\newtheorem{proposition}{Proposition}
\newtheorem{lemma}{Lemma}
\newtheorem{observation}{Observation}
\begin{document}
\title{\Large\bf Graphs with large generalized $3$-connectivity \footnote{Supported by NSFC
No.11071130.}}
\author{\small Hengzhe Li, Xueliang Li, Yaping Mao, Yuefang Sun
\\
\small Center for Combinatorics and LPMC-TJKLC
\\
\small Nankai University, Tianjin 300071, China
\\
\small lhz2010@mail.nankai.edu.cn; lxl@nankai.edu.cn;\\
\small maoyaping@ymail.com; bruceseun@gmail.com}
\date{}
\maketitle
\begin{abstract}
Let $S$ be a nonempty set of vertices of a connected graph $G$. A collection $T_1,\cdots,T_{\ell}$ of trees in $G$
is said to be internally disjoint trees connecting $S$ if $E(T_i)\cap E(T_j)=\emptyset$ and $V(T_i)\cap V(T_j)=S$ for
any pair of distinct integers $i,j$, where $1\leq i,j\leq r$. For an integer $k$ with $2\leq k\leq n$, the generalized
$k$-connectivity $\kappa_k(G)$ of $G$ is the greatest positive integer $r$ such that $G$ contains at least $r$ internally
disjoint trees connecting $S$ for any set $S$ of $k$ vertices of $G$. Obviously, $\kappa_2(G)$ is the connectivity
of $G$. In this paper, sharp upper and lower bounds of $\kappa_3(G)$ are given for a connected graph $G$ of order $n$,
that is, $1\leq \kappa_3(G)\leq n-2$. Graphs of order $n$ such that $\kappa_3(G)=n-2,\, n-3$ are characterized, respectively.

{\flushleft\bf Keywords}: connectivity, internally disjoint trees, generalized connectivity.\\[2mm]
{\bf AMS subject classification 2010:} 05C40, 05C05.
\end{abstract}

\section{Introduction}

All graphs in this paper are undirected, finite and simple. We refer to book \cite{bondy} for graph theoretical
notation and terminology not described here.

The generalized connectivity of a graph $G$, which was introduced by Chartrand et al. in \cite{Chartrand1},
is a natural and nice generalization of the concept of connectivity. A tree $T$ is called \emph{an $S$-tree} if
$S\subseteq V(T)$, where $S\in V(G)$. A collection $T_1,\cdots,T_{\ell}$ of trees in $G$ is said to be
\emph{internally disjoint trees connecting $S$} if $E(T_i)\cap E(T_j)=\emptyset$ and $V(T_i)\cap V(T_j)=S$ for
any pair of distinct integers $i,j$, where $1\leq i,j\leq r$. For an integer $k$ with $2\leq k\leq n$,
the \emph{generalized $k$-connectivity} $\kappa_k(G)$ of $G$ is the greatest positive integer $r$ such that
$G$ contains at least $r$ internally disjoint trees connecting $S$ for any set $S$ of $k$ vertices of $G$.
Obviously, $\kappa_2(G)$ is the connectivity of $G$. By convention, for a connected graph with less than $k$ 
vertices, we set $\kappa_k(G)=1$; for a disconnected graph $G$, we set $\kappa_k(G)=0$.

In addition to being natural combinatorial measures, the generalized connectivity can be motivated by
their interesting interpretation in practice. For example, suppose that $G$ represents a network. If one
considers to connect a pair of vertices of $G$, then a path is used to connect them. However, if one wants
to connect a set $S$ of vertices of $G$ with $|S|\geq 3$, then a tree has to be used to connect them.
This kind of tree with minimum order for connecting a set of vertices is usually called a Steiner tree,
and popularly used in the physical design of VLSI, see \cite{Sherwani}. Usually, one wants to consider
how tough a network can be, for the connection of a set of vertices. Then, the number of totally independent
ways to connect them is a measure for this purpose. The generalized $k$-connectivity can serve for
measuring the capability of a network $G$ to connect any $k$ vertices in $G$.

There have appeared many results on the generalized connectivity, see \cite{Chartrand1, Chartrand2, 
Okamoto, Li1, Li2, Li3, Li4, Li5}.
Chartrand et al. in \cite{Chartrand2} obtained the following result in the generalized connectivity.
\begin{lemma}\cite{Chartrand2}
For every two integers $n$ and $k$ with $2\leq k\leq n$,
$$
\kappa_k(K_n)=n-\lceil k/2\rceil.
$$
\end{lemma}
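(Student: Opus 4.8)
The plan is to fix an arbitrary set $S$ of $k$ vertices and compute the maximum number of internally disjoint $S$-trees; since $K_n$ is vertex-transitive and every $k$-subset of $V(K_n)$ is equivalent, this value is exactly $\kappa_k(K_n)$. Write $R=V(K_n)\setminus S$, so that $|R|=n-k$; I will call the vertices of $S$ \emph{terminals} and those of $R$ \emph{Steiner vertices}. The argument then splits into a construction giving the lower bound $\kappa_k(K_n)\ge n-\lceil k/2\rceil$ and a counting argument giving the matching upper bound.

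For the lower bound I would exhibit two families of $S$-trees. First, for each Steiner vertex $v\in R$ take the star $T_v$ with center $v$ and leaf set $S$; this is an $S$-tree, and the $n-k$ stars are pairwise internally disjoint because distinct stars meet only in $S$ and their edge sets (all of the form $vs$ with $v\in R$, $s\in S$) are disjoint. Second, I would take $\lfloor k/2\rfloor$ pairwise edge-disjoint spanning trees of the complete graph on $S$; their existence is classical, for instance via the decomposition of $K_k$ into Hamiltonian paths when $k$ is even and into Hamiltonian cycles (each minus one edge) when $k$ is odd, or from the Nash-Williams and Tutte spanning-tree packing theorem. Each such tree has vertex set exactly $S$ and uses only edges inside $S$, so it is internally disjoint from every star and from the other spanning trees. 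Altogether this yields $(n-k)+\lfloor k/2\rfloor=n-\lceil k/2\rceil$ internally disjoint $S$-trees.

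For the upper bound, suppose $T_1,\dots,T_r$ are internally disjoint $S$-trees, and classify them by whether they meet $R$. Let $p$ be the number of trees containing at least one Steiner vertex and $q$ the number contained entirely in $S$, so $r=p+q$. Because $V(T_i)\cap V(T_j)=S$, no Steiner vertex can lie in two trees; hence the $p$ trees of the first type consume $p$ distinct Steiner vertices, giving $p\le n-k$. A tree of the second type has vertex set exactly $S$, so it is a spanning tree of the complete graph on $S$ and uses exactly $k-1$ edges, all lying inside $S$. Since the trees are edge-disjoint and there are only $\binom{k}{2}$ edges inside $S$, we obtain $q(k-1)\le\binom{k}{2}$, that is $q\le\lfloor k/2\rfloor$. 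Adding the two estimates gives $r\le(n-k)+\lfloor k/2\rfloor=n-\lceil k/2\rceil$.

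The two bounds coincide, which proves the lemma. The step requiring the most care is the lower bound: one must supply genuinely $\lfloor k/2\rfloor$ edge-disjoint spanning trees inside $S$ and then verify that they stay internally disjoint from the stars, which is exactly where the clean separation of edge types (terminal--terminal versus terminal--Steiner) is used. The upper bound, by contrast, reduces to two independent capacity constraints---one on Steiner vertices and one on edges inside $S$---whose sum is precisely $n-\lceil k/2\rceil$.
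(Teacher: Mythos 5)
Your proof is correct, but note that the paper itself contains no proof of this lemma: it is quoted verbatim as a known result from the cited reference of Chartrand, Okamoto and Zhang, so there is nothing internal to compare against. Your argument is a complete, self-contained proof and is essentially the standard one for this fact: the lower bound via the $n-k$ stars centered at Steiner vertices together with $\lfloor k/2\rfloor$ edge-disjoint spanning trees of the clique on $S$ (whose existence you correctly justify either by Hamiltonian path/cycle decompositions of $K_k$ or by Nash-Williams--Tutte), and the upper bound via the two independent capacity constraints $p\le n-k$ (each tree leaving $S$ consumes a private Steiner vertex, since vertex intersections equal $S$) and $q(k-1)\le\binom{k}{2}$ (trees inside $S$ are spanning trees of $K_k$ and are pairwise edge-disjoint). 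Both counts are airtight, and the identity $n-k+\lfloor k/2\rfloor=n-\lceil k/2\rceil$ closes the argument. The only remark worth making is that your appeal to vertex-transitivity could be stated slightly more precisely as the fact that the symmetric group acts transitively on $k$-subsets of $V(K_n)$, so the maximum number of internally disjoint $S$-trees is the same for every $S$; this is exactly what lets a single-set computation determine $\kappa_k(K_n)$.
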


The following result is given by Li et al. in \cite{Li4}, which will be used later.
\begin{lemma}\cite{Li4}
For any connected graph $G$, $\kappa_3(G)\leq \kappa(G)$.
Moreover, the upper bound is sharp.
\end{lemma}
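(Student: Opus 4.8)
The plan is to prove the inequality by exhibiting, for one carefully chosen set of three vertices, an obstruction of size $\kappa(G)$ to packing internally disjoint trees; since $\kappa_3(G)$ is by definition a \emph{minimum} over all $3$-sets $S$, producing a single bad $S$ already bounds $\kappa_3(G)$ from above. First I would dispose of the complete graph: if $G=K_n$, then Lemma 1 gives $\kappa_3(G)=n-2\le n-1=\kappa(G)$, so from now on I assume $G$ is not complete and set $k=\kappa(G)$. Then $G$ has a minimum vertex cut $U$ with $|U|=k$, and I would choose $x,y$ in two different components of $G-U$. These are nonadjacent, and crucially every $x$--$y$ path in $G$ must meet $U$, since to pass from the component of $x$ to that of $y$ one has to go through the cut.

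The core step is to place the third vertex outside the cut. Assuming $n\ge k+3$, I would take $z\in V(G)\setminus(U\cup\{x,y\})$ and set $S=\{x,y,z\}$. Given any internally disjoint trees $T_1,\dots,T_r$ connecting $S$, each $T_i$ contains a unique $x$--$y$ path $P_i$, and $P_i$ must pass through some $w_i\in U$. Because $z\notin U$ we have $U\cap S=\emptyset$, so $w_i\in V(T_i)\setminus S$; since $V(T_i)\cap V(T_j)=S$ for $i\ne j$, the vertices $w_1,\dots,w_r$ are pairwise distinct elements of $U$, whence $r\le|U|=k$. As this bounds the maximum number of internally disjoint $S$-trees, it yields $\kappa_3(G)\le k=\kappa(G)$.

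The remaining case is $n\le k+2$. Since $U$ is a cut and $x,y$ lie in distinct components, $|V(G)\setminus U|\ge 2$, forcing $n=k+2$ and $V(G)\setminus U=\{x,y\}$; then $x$ is adjacent only to vertices of $U$, so $\deg_G(x)\le k$. For any $3$-set $S$ containing $x$, every tree connecting $S$ uses at least one edge incident with $x$, and as the trees are edge-disjoint there are at most $\deg_G(x)\le k$ of them, giving $\kappa_3(G)\le\kappa(G)$ again. I expect this degenerate case, together with the insistence that $z$ be chosen outside $U$ (which is exactly what guarantees the $w_i$ are genuinely distinct rather than all coinciding at a shared vertex of $S$), to be the only delicate points of the inequality; everything else is direct counting.

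For sharpness I would produce graphs attaining equality. The cheapest family is any connected graph of connectivity one, for instance a path $P_n$ or any tree: there $\kappa(G)=1$, while the Steiner tree joining a $3$-set is unique, so no two edge-disjoint $S$-trees can exist and $\kappa_3(G)=1=\kappa(G)$. To obtain equality for every value of the connectivity I would examine $G=K_{k+2}-e$, where deleting a single edge $xy$ from $K_{k+2}$ leaves the two endpoints with degree $k$ and keeps $\kappa(G)=k$; one checks directly that every $3$-set still admits $k$ internally disjoint trees, so $\kappa_3(G)=k=\kappa(G)$. Verifying this for all triples, separately for those meeting $\{x,y\}$ and those avoiding it, is the most computational part of the whole argument, but it is routine case-checking rather than a conceptual obstacle.
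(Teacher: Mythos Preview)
The paper does not prove this lemma; it merely quotes it from \cite{Li4} and uses it later. There is therefore no in-paper argument to compare your proposal against.

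Your argument is correct. The key observation---choosing $x,y$ separated by a minimum cut $U$ and then a third vertex $z\notin U$, so that each $S$-tree must spend a \emph{distinct} internal vertex of $U$ on its $x$--$y$ path---is the standard one, and your handling of the degenerate case $n=k+2$ via the degree of $x$ is clean. One small remark: for the sharpness examples you need not ``check directly'' that $\kappa_3(K_{k+2}\setminus e)=k$; this is exactly the content of Theorem~1 of the present paper (with $n=k+2$), so you can simply invoke it. Likewise the complete graph itself already attains equality, since $\kappa_3(K_n)=n-2$ by Lemma~1 and $\kappa(K_n)=n-1$ only \emph{fails} equality; your tree example (or $K_n\setminus e$) is indeed needed for a sharp instance.
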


In Section 2, sharp upper and lower bounds of $\kappa_3(G)$ are given for a connected graph $G$ of order $n$,
that is, $1\leq \kappa_3(G)\leq n-2$. Moreover, graphs of order $n$ such that $\kappa_3(G)=n-2,\, n-3$ are
characterized, respectively.

\section{Graphs with $3$-connectivity $n-2, n-3$}

For a graph $G$, let $V(G)$, $E(G)$ be the set of vertices, the set of edges, respectively, and $|G|$ and
$\|G\|$ the order, the size of $G$, respectively. If $S$ is a subset of vertices of a graph $G$,
the subgraph of $G$ induced by $S$ is denoted by $G[S]$. If $M$ is a subset of edges of $G$, the subgraph
of $G$ induced by $M$ is denoted by $G[M]$. As usual, the \emph{union} of two graphs $G$ and $H$ is the
graph, denoted by $G\cup H$, with vertex set $V(G)\cup V(H)$ and edge set $E(G)\cup E(H)$. Let $mH$ be
the disjoint union of $m$ copies of a graph $H$. For $U\subseteq V(G)$, we denote $G\setminus U$ the subgraph
by deleting the vertices of $U$ along with the incident edges from $G$. Let $d_G(v)$, simply denoted by $d(v)$,
be the degree of a vertex $v$, and let $N_G(v)$ be the neighborhood set of $v$ in $G$. A subset $M$ of $E(G)$
is called a \emph{matching} in $G$ if its elements are such edges that no two of them are adjacent in $G$.
A matching $M$ saturates a vertex $v$, or $v$ is said to be \emph{$M$-saturated}, if some edge of $M$ is
incident with $v$; otherwise, $v$ is \emph{$M$-unsaturated}. $M$
is a \emph{maximum matching} if $G$ has no matching $M'$ with $|M'|>|M|$.

\begin{observation}
If $G$ is a graph obtained from the complete graph $K_n$ by deleting an edge set $M$ and $\Delta(K_n[M])\geq 3$,
then $\kappa_3(G)\leq n-4$.
\end{observation}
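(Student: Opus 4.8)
The plan is to bound $\kappa_3(G)$ by the minimum degree of $G$ and then locate a single vertex of small degree forced by the hypothesis. First I would record the elementary chain of inequalities $\kappa_3(G)\le\kappa(G)\le\delta(G)$ valid for every connected graph $G$: the first inequality is exactly Lemma 2, and the second is the classical fact that the vertex-connectivity of a graph never exceeds its minimum degree. Consequently, to prove $\kappa_3(G)\le n-4$ it suffices to produce one vertex of $G$ whose degree is at most $n-4$.

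Next I would exploit the assumption $\Delta(K_n[M])\ge 3$. Choose a vertex $v$ attaining this maximum, so that $d_{K_n[M]}(v)=\Delta(K_n[M])\ge 3$. Since $G$ is obtained from $K_n$ by deleting precisely the edges of $M$, the vertex $v$ loses $d_{K_n[M]}(v)$ of its $n-1$ incident edges, and hence $d_G(v)=(n-1)-d_{K_n[M]}(v)\le(n-1)-3=n-4$. Therefore $\delta(G)\le d_G(v)\le n-4$, and combining this with the chain above gives $\kappa_3(G)\le n-4$, as required.

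As a self-contained alternative that avoids quoting $\kappa(G)\le\delta(G)$, I would argue directly from the definition. Fix a set $S$ of three vertices containing the vertex $v$ above. Any tree $T_i$ connecting $S$ has $v\in V(T_i)$ and, since it spans at least three vertices, $v$ has degree at least $1$ in $T_i$; thus each $T_i$ uses at least one edge of $G$ incident with $v$. Because internally disjoint trees are in particular pairwise edge-disjoint, at most $d_G(v)$ of them can coexist, so the number of internally disjoint trees connecting this particular $S$ is at most $d_G(v)\le n-4$. As $\kappa_3(G)$ is the minimum of this quantity over all $3$-subsets $S$, we again obtain $\kappa_3(G)\le n-4$. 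I do not anticipate any genuine obstacle here: the entire content is the degree computation at the vertex $v$, and the only point requiring a moment's care is the verification that each connecting tree must expend at least one edge at $v$ so that edge-disjointness caps their number by $d_G(v)$.
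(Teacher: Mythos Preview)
Your argument is correct and is precisely the reasoning the paper has in mind: the Observation is stated there without proof, but the paper uses the identical degree bound $\kappa_3(G)\le\delta(G)$ explicitly a few lines later (in the sufficiency part of Theorem~1). One small remark: Lemma~2 is stated only for connected $G$, so strictly speaking your first chain $\kappa_3(G)\le\kappa(G)\le\delta(G)$ needs the side observation that if $G$ happens to be disconnected then $\kappa_3(G)=0\le n-4$ (note $\Delta(K_n[M])\ge 3$ forces $n\ge 4$); your self-contained second argument already covers this case.
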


The observation above indicates that if $\kappa_3(G)\geq n-3$, then each component of $K_n[M]$ must be a path or a cycle.

After the preparation above, we start to give our main results of this paper. At first, we give the bounds of $\kappa_3(G)$.
\begin{proposition}
For a connected graph $G$ of order $n \ (n\geq 3)$, $1\leq \kappa_3(G)\leq n-2$. Moreover, the upper and lower bounds are sharp.
\end{proposition}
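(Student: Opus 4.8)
The plan is to prove the two inequalities separately and then exhibit graphs attaining each bound. For the lower bound, I would argue that since $G$ is connected with $n\geq 3$ vertices, every spanning tree of $G$ is an $S$-tree for any $3$-set $S\subseteq V(G)$; hence $G$ always contains at least one tree connecting $S$, which forces $\kappa_3(G)\geq 1$.

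For the upper bound $\kappa_3(G)\leq n-2$, I would fix an arbitrary set $S=\{x,y,z\}$ of three vertices and suppose that $T_1,\ldots,T_r$ are internally disjoint trees connecting $S$. The central observation is a counting restriction on how many of these trees can avoid the $n-3$ vertices outside $S$. Any tree connecting $S$ that uses no vertex outside $S$ is a spanning tree of $G[S]$, hence has exactly two edges drawn from the at most $\binom{3}{2}=3$ edges available among $x,y,z$. Since the trees are edge-disjoint, two such trees would require four distinct edges among only three vertices, which is impossible; thus at most one of $T_1,\ldots,T_r$ can be confined to $S$. Each of the remaining $r-1$ trees must therefore contain at least one vertex of $V(G)\setminus S$, and because the trees meet only in $S$, these interior vertices are pairwise distinct across the $r-1$ trees. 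As $|V(G)\setminus S|=n-3$, I obtain $r-1\leq n-3$, that is, $r\leq n-2$, and taking the maximum over all choices of $S$ yields $\kappa_3(G)\leq n-2$.

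Finally, for sharpness I would invoke Lemma 1 to see that the complete graph attains the upper bound, since $\kappa_3(K_n)=n-\lceil 3/2\rceil=n-2$; and for the lower bound I would take any tree, such as the path $P_n$, where between any three vertices there is a unique connecting subtree and no two edge-disjoint $S$-trees can coexist, forcing $\kappa_3=1$.

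The step I expect to be the main obstacle is the counting argument bounding the number of ``Steiner-free'' trees: the full strength of the bound $n-2$ (rather than the weaker $n-1$ that would follow directly from $\kappa_3(G)\leq\kappa(G)\leq n-1$ via Lemma 2) rests on the fact that \emph{at most one} tree can be confined to $S$. Some care is needed to treat the degenerate cases in which $G[S]$ has fewer than three edges, but the uniform inequality ``four edges would be needed among only three vertices'' disposes of all of them at once.
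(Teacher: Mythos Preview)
Your argument is correct. The lower bound and its sharpness are handled just as in the paper (the paper cites $K_{1,n-1}$ rather than $P_n$, but any tree on $n\geq 3$ vertices works for the same reason you give). The one genuine difference is in the upper bound: the paper simply invokes the monotonicity $\kappa_3(G)\leq\kappa_3(K_n)$ together with Lemma~1 to conclude $\kappa_3(G)\leq n-2$, whereas you give a self-contained counting argument---at most one $S$-tree can live entirely inside $S$, and each of the remaining trees consumes a private vertex of $V(G)\setminus S$. Your route is slightly longer but has the advantage of not depending on the formula for $\kappa_3(K_n)$; in effect you are reproving the $k=3$ upper-bound half of Lemma~1 in situ. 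Since you still appeal to Lemma~1 for sharpness, the net economy is modest, but the argument is perfectly sound and arguably more transparent about \emph{why} $n-2$ is the right number.
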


\begin{proof}
It is easy to see that $\kappa_3(G)\leq \kappa_3(K_n)$. From this together with Lemma 1, we have $\kappa_3(G)\leq n-2$.
Since $G$ is connected, $\kappa_3(G)\geq 1$. The result holds.

It is easy to check that the complete graph $K_n$ attains the upper bound and the complete
bipartite graph $K_{1,n-1}$ attains the lower bound.
\end{proof}

\begin{theorem}
For a connected graph $G$ of order $n$, $\kappa_3(G)=n-2$ if and only if $G=K_n$ or $G=K_n\setminus e$.
\end{theorem}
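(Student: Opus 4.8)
The statement is an equivalence, so I would prove the two directions separately; the forward (``only if'') direction is where essentially all the work lies. For the ``if'' direction, $\kappa_3(K_n)=n-2$ is immediate from Lemma 1 with $k=3$. For $G=K_n\setminus e$, Proposition 1 already gives $\kappa_3\le n-2$, so it suffices to exhibit, for an arbitrary $3$-set $S=\{u,v,w\}$, a family of $n-2$ internally disjoint $S$-trees. The plan is to start from the canonical family that works in $K_n$: one ``star'' $T_z=\{zu,zv,zw\}$ for each of the $n-3$ vertices $z\notin S$, together with one ``internal'' tree consisting of two of the three edges inside $S$. Deleting the single edge $e$ destroys at most one member of this family, and I would repair it by a short rerouting according to where $e$ lies: if $e$ joins two vertices outside $S$ the family is untouched; if $e=zu$ with $z\notin S$, I replace $T_z$ by $\{zv,zw,uv\}$ and take the internal tree to be $\{uw,vw\}$; if $e\subseteq S$, say $e=uv$, I keep all stars and take the internal tree to be $\{uw,vw\}$. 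In each case the resulting $n-2$ trees are internally disjoint, giving $\kappa_3(K_n\setminus e)\ge n-2$ and hence equality.

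For the ``only if'' direction I would argue the contrapositive. Writing $G=K_n\setminus M$ with $M$ the set of missing edges, the hypotheses $G\neq K_n$ and $G\neq K_n\setminus e$ mean $|M|\ge 2$, and the goal is $\kappa_3(G)\le n-3$. The Observation immediately disposes of the case $\Delta(K_n[M])\ge 3$ (giving even $\kappa_3\le n-4$), so I may assume $K_n[M]$ is a disjoint union of paths and cycles. The next, still-easy case is $\Delta(K_n[M])=2$: some vertex $a$ then has two missing edges, so $\deg_G(a)=n-3$, and choosing any $3$-set $S\ni a$ forces $\kappa_3(G)\le n-3$, since every $S$-tree meets $a$ in at least one edge and the trees are edge-disjoint, so their number is at most $\deg_G(a)$.

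The remaining case, $K_n[M]$ a matching with at least two edges $ab$ and $cd$, is the heart of the proof, because here the crude degree bound only yields $n-2$. I would take $S=\{a,b,c\}$ and suppose, for contradiction, that $n-2$ internally disjoint $S$-trees exist. A counting step comes first: the trees use pairwise disjoint sets of the $n-3$ vertices outside $S$, so at most $n-3$ of them can use an outside vertex; hence at least one is an internal tree spanning only $S$. But $G[S]$ is the single path $a\!-\!c\!-\!b$ (the edge $ab$ is missing), so there is exactly one internal tree and it must use both edges $ac$ and $bc$; consequently every remaining tree uses exactly one outside vertex, and together they cover all $n-3$ of them, in particular the vertex $d$. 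The contradiction is then local: the tree $T_d$ containing $d$ has vertex set exactly $\{a,b,c,d\}$ and must connect $c$, yet at $c$ the only candidate edges are $ca,cb$ (already committed to the internal tree, by edge-disjointness) and $cd$ (missing from $G$), so $c$ cannot be attached. Thus $\kappa_3(G)\le n-3$, completing the characterization.

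I expect the bookkeeping in the matching case, namely pinning down that there is exactly one internal tree and that it is forced to consume both edges at $c$ inside $S$, to be the delicate part, since it is precisely this forcing that strands the vertex $d$; the path/cycle reduction and the degree bound handle everything else routinely.
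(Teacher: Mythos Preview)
Your proposal is correct and follows essentially the same approach as the paper: both directions split into the same cases (location of $e$ relative to $S$ for the ``if'' part; adjacent versus independent missing edges for the ``only if'' part), and your matching-case contradiction via the tree through $d$ is the same obstruction the paper exposes by analyzing $G\setminus U$ as a $4$-cycle. The only cosmetic difference is that the paper first reduces to $|M|=2$ by monotonicity, whereas you handle general $|M|\ge2$ directly; your counting at the vertex $d$ is arguably a bit cleaner than the paper's $G\setminus U$ phrasing, but the underlying idea is identical.
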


\begin{proof}
\emph{Necessity}
If $G=K_n$, then we have $\kappa_3(G)=n-2$ by Lemma 1. If $G=K_n\setminus e$, it follows by Proposition 1 that
$\kappa_3(G)\leq n-2$. We will show that $\kappa_3(G)\geq n-2$. It suffices to show that for any $S\subseteq V(G)$ such that $|S|=2$,
there exist $n-2$ internally disjoint $S$-trees in $G$.

Let $e=uv$, and $W=G\setminus\{u,v\}=\{w_1,w_2,\cdots,w_{n-2}\}$. Clearly, $G[W]$ is a complete graph of order $n-2$.

\begin{figure}[h,t,b,p]
\begin{center}
\scalebox{0.8}[0.8]{\includegraphics{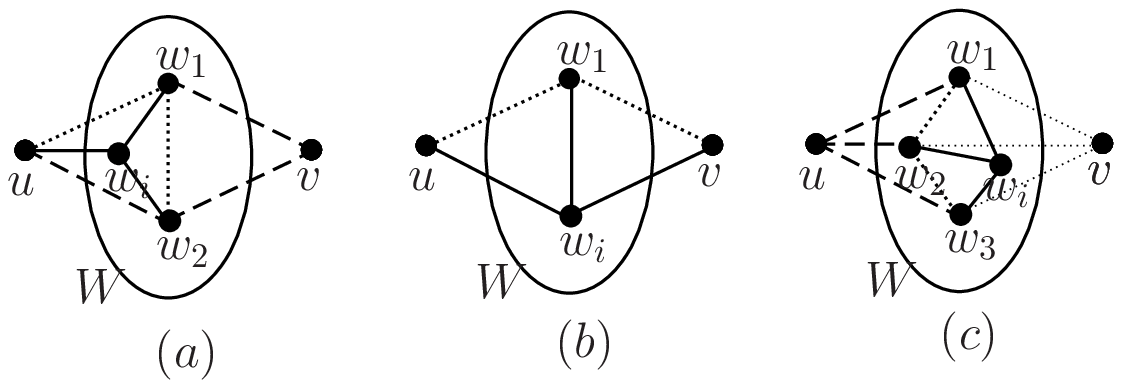}}\\
Figure 1 The edges of a tree are by the same type of lines.
\end{center}
\end{figure}

If $|\{u,v\}\cap S|=1$ (See Figure 1 $(a)$),
without loss of generality, let $S=\{u,w_1,w_2\}$. The trees $T_i=
w_iu\cup w_iw_1\cup w_iw_2$ together with $T_1=uw_1\cup w_1w_2$, $T_2=uw_2\cup vw_2\cup vw_1$
form $n-2$ pairwise internally disjoint $S$-trees, where $i=2,\cdots,n-2$.

If $|\{u,v\}\cap S|=2$(See Figure 1 $(b)$),
without loss of generality, let $S=\{u,v,w_1\}$. The trees $T_i=
w_iu\cup w_iv\cup w_iw_1$ together with $T_1=uw_1\cup w_1v$ form $n-2$ pairwise internally disjoint $S$-trees, where $i=2,\cdots,n-2$.

Otherwise, suppose $S\subseteq W$ (See Figure 1 $(c)$). Without loss of generality, let $S=\{w_1,w_2,w_3\}$. The
trees $T_i=w_iw_1\cup w_iw_2\cup w_iw_3(i=4,5,\cdots,n-2)$ together with $T_1=w_2w_1\cup w_2w_3$ and $T_2=uw_1\cup uw_2\cup uw_3$
and $T_3=vw_1\cup vw_2\cup vw_3$ form $n-2$ pairwise internally disjoint $S$-trees.

From the arguments above , we conclude that $\kappa_3(K_n\setminus e)\geq n-2$. From this together with Proposition 1,
$\kappa(K_n\setminus e)=n-2$.

\emph{Sufficiency}
Next we show that if $G\neq K_n, K_n\setminus e$, then $\kappa_3(G)\leq n-3$, where $G$ is a connected graph.
Let $G$ be the graph obtained from $K_n$ by deleting two edges. It suffices to prove that $\kappa_3(G)\leq n-3$.
Let $G=K_n\setminus\{e_1,e_2\}$, where $e_1,e_2\in E(K_n)$. If $e_1$ and $e_2$ has a common vertex and form a $P_3$, 
denoted by $v_1,v_2,v_3$. Thus $d_G(v)=n-3$. So $\kappa_3(G)\leq \delta(G)\leq n-3$.
If $e_1$ and $e_2$ are independent edges. Let $e_1=xy$ and $e_2=vw$. Let $S=\{x,y,v\}$.
We consider the internally disjoint $S$-trees.
It is easy to see that
$d_{G}(x)=d_{G}(y)=d_{G}(v)=n-2$. Furthermore, each edge incident to $x$ (each neighbor adjacent to $x$)
in $G$ belongs to an $S$-tree so that we can obtain $n-2$ $S$-trees. The same is true for the vertices $y$ and $v$.
Let $\mathcal {T}$ be a set of internally disjoint $S$-trees that contains as many $S$-trees as possible
and $U=N_G(x)\cap N_G(y)\cap N_G(v)$. There exist at most $|U|=n-4$ $S$-tree in $\mathcal {T}$ that contain at 
least one vertex in $U$. Next we show that there exist one $S$-tree in $G\setminus U$. Suppose that there exist two internally disjoint
$S$-trees in $G\setminus U$. Since $G\setminus U$ is cycle of order $4$, and there exists at most one $S$-tree
in $G\setminus U$. So $\kappa_3(G)=|\mathcal {T}|\leq n-3$.
\end{proof}

\begin{theorem}
Let $G$ be a connected graph of order $n(n\geq 3)$. $\kappa_3(G)=n-3$ if and only if $G$ is a graph obtained from the
complete graph $K_n$ by deleting an edge set $M$ such that $K_n[M]=P_4$ or $K_n[M]=P_3\cup P_2$ or $K_n[M]=C_3\cup P_2$
or $K_n[M]=r P_2( 2\leq r\leq \lfloor\frac{n}{2}\rfloor)$.
\end{theorem}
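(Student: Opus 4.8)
The plan is to prove the two implications separately, and to dispose of all the ``$\le n-3$'' upper bounds in one stroke. By Theorem 1, $\kappa_3(G)=n-2$ holds only for $G=K_n$ and $G=K_n\setminus e$, so every other connected graph has $\kappa_3(G)\le n-3$ by Proposition 1 together with the integrality of $\kappa_3$. Since each graph $G$ in the statement is obtained from $K_n$ by deleting at least two edges, it is neither $K_n$ nor $K_n\setminus e$, and hence automatically satisfies $\kappa_3(G)\le n-3$. Thus the theorem reduces to: (i) \emph{sufficiency} --- for each listed $G$ produce $n-3$ internally disjoint $S$-trees for \emph{every} $3$-set $S$; and (ii) \emph{necessity} --- show that every edge-deletion set not on the list forces $\kappa_3(G)\le n-4$.

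For sufficiency I would fix $S=\{x,y,z\}$, set $W=V(G)\setminus S$ (so $|W|=n-3$), and use the star $T_w=wx\cup wy\cup wz$ for each $w\in W$ adjacent to all three vertices of $S$; these are pairwise internally disjoint and use no edge inside $S$. If $B\subseteq W$ denotes the \emph{blocked} external vertices, i.e.\ those incident to a deleted edge reaching $S$, the stars give $n-3-|B|$ trees, and it remains to build $|B|$ more. These are obtained by rerouting: when $wx\notin E(G)$ one joins $w$ to the two vertices of $S$ it still reaches and hangs $x$ off one of them, exactly as in the proof of Theorem 1. For each listed $M$ the deletion is small and spread out, so $|B|\le 3$ and only a few edges inside $S$ are missing; a short case analysis according to how $S$ meets the deleted edges then supplies the remaining trees. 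The matching case $M=rP_2$ is handled separately because here $\delta(G)=n-2$: at most the three matching-partners of $x,y,z$ are blocked, which leaves ample room to assemble $n-3$ trees for any $S$.

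For necessity, assume $\kappa_3(G)=n-3$. By Theorem 1, $G\neq K_n,K_n\setminus e$, so $G=K_n\setminus M$ with $|M|\ge 2$; and since $\kappa_3(G)=n-3>n-4$, Observation 1 (in the form of the remark following it) forces $\Delta(K_n[M])\le 2$, so $K_n[M]$ is a disjoint union of paths and cycles. It remains to eliminate every such union that is not on the list by exhibiting a single $3$-set $S$ admitting at most $n-4$ internally disjoint $S$-trees. The witnessing $S$ should contain two non-adjacent vertices of degree $n-3$ --- two degree-$2$ vertices of $K_n[M]$ lying in different components, or sufficiently far apart in one component --- together with a well-chosen third vertex. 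One then argues that every $S$-tree must route through the few remaining edges at these two vertices: since both have only $n-3$ incident edges, a counting argument shows that after accounting for the star-trees the deficit is too large, capping the total at $n-4$.

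The crux, and the step I expect to be hardest, is exactly this exhaustiveness in necessity. One must enumerate all unions of paths and cycles and, for each family absent from the list --- a cycle $C_k$ with $k\ge 4$, a path $P_k$ with $k\ge 5$, two ``large'' components such as $2P_3$, or a large component augmented by extra copies of $P_2$ such as $P_4\cup P_2$ --- supply the witnessing $S$ and a clean bottleneck count. The counting is delicate precisely because we sit at the threshold $\delta(G)=n-3$, where the degree bound alone is indecisive and one genuinely needs the structural fact that every $S$-tree meets the common neighbourhood of $S$; the model computations are $K_n\setminus C_4$ and $K_n\setminus 2P_3$, where this argument caps the trees at $n-4$. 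I would give particular scrutiny to the smallest dense configurations near the boundary --- a lone $P_3$ or $C_3$, and the contrast between $P_3\cup P_2$ and $P_4\cup P_2$ --- since these pin down where the $n-3$/$n-4$ cutoff really lies and are the likeliest points for the classification to miss a case.
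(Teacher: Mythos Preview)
Your plan is essentially the paper's proof: it too gets the upper bound from Theorem~1, builds the $n-3$ trees in the constructive direction from the stars $T_w=wx\cup wy\cup wz$ supplemented by a few rerouted trees for the blocked external vertices (split into cases by how $S$ meets $M$), and in the eliminative direction first restricts $K_n[M]$ to unions of paths and cycles via Observation~1 and then, for each excluded configuration, exhibits a witnessing $3$-set $S$ together with a common-neighbourhood count $|U|$ bounding the number of internally disjoint $S$-trees by $n-4$. The only organizational difference is that the paper packages the elimination step as three structural claims---at most one component of order larger than~$2$; any component of order larger than~$3$ forces $K_n[M]=P_4$; a component of order~$3$ forces $P_3\cup P_2$ or $C_3\cup P_2$---rather than running through $C_{k\ge 4}$, $P_{k\ge 5}$, $2P_3$, $P_4\cup P_2$, etc.\ individually, but the underlying witnesses and bottleneck counts are exactly what you describe.
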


\begin{proof}
\emph{Sufficiency.}~~Assume that $\kappa_3(G)=n-3$. Then $|M|\geq 2$ by Theorem 1 and each component of $K_n[M]$
is a path or a cycle by Observation 1. We will show that the following claims hold.

\textbf{Claim 1.}~~$K_n[M]$ has at most one component of order larger than 2.

Suppose, to the contrary, that $K_n[M]$ has two components of order larger than 2, denoted by $H_1$ and $H_2$ 
(See Figure 2 $(a)$). Pick a set $S=\{x,y,z\}$ such that $x,y\in H_1$, $z\in H_2$,
$d_{H_1}(y)=d_{H_2}(z)=2$, and $x$ is adjacent to $y$ in $H_1$. Since $d_G(y)=n-1-d_{H_1}(y)=n-3$, each edge incident
to $y$ (each neighbor adjacent to $y$) in $G$ belongs to an $S$-tree so that we can obtain $n-3$ internally disjoint 
$S$-trees. The same is true for the vertex $z$. The same is true for the vertices $y$ and $v$.
Let $\mathcal {T}$ be a set of internally disjoint $S$-trees that contains as many $S$-trees as possible and $U$ be the
vertex set whose elements are adjacent to both of $y$ and $z$. There exist at most $|U|=n-6$ $S$-trees in $\mathcal {T}$
that contain a vertex in $U$.

Next we show that there exist at most $2$ $S$-trees in $G\setminus U$ (See Figure 2 $(a)$). Suppose that there exist
$3$ internally disjoint $S$-trees in $G\setminus U$. Since $d_{G\setminus U}(y)=d_{G\setminus U}(z)=3$, $yz$ must be
in an $S$-tree, say $T_{n-5}$. Then we must use one element of the edge set $E_1=\{zx,v_2z,v_3y,v_1y\}$ if we want to
reach $x$ in $T_{n-5}$. Thus $d_{T_{n-5}}(y)=2$ or $d_{T_{n-5}}(z)=2$, which implies that there exists at most one
$S$-tree except $T_{n-5}$ in $G\setminus U$. So $\kappa_3(G)=|\mathcal {T}|\leq n-4$, a contradiction.

\begin{figure}[h,t,b,p]
\begin{center}
\scalebox{0.8}[0.8]{\includegraphics{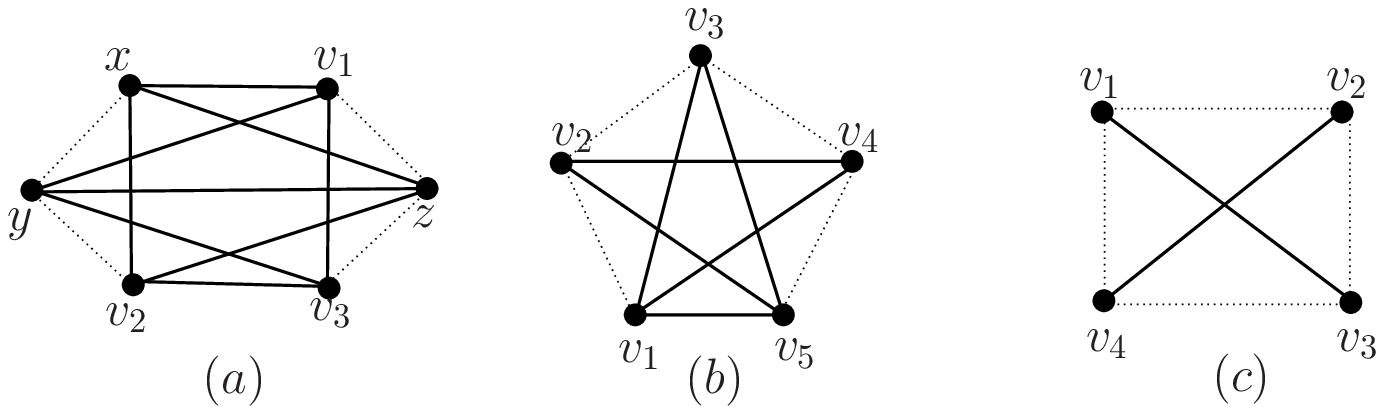}}\\
Figure 2 Graphs for Claim 1 and Claim 2(The dotted lines stand for edges in $M$).
\end{center}
\end{figure}

\textbf{Claim 2.}~~If $H$ is a component of $K_n[M]$ of order larger than three, then $K_n[M]=P_4$.

Suppose, to the contrary, that $H$ is a path or a cycle of order larger than $4$, or a cycle of order $4$,
or $H$ is a path of order $4$ and $K_n[M]$ has another component.

If $H$ is a path or a cycle of order larger than $4$, we can pick a $P_5$ in $H$. Let $P_5=v_1,v_2,v_3,v_4,v_5$(See Figure 2 $(b)$)
and $S=\{v_2,v_3,v_4\}$. Since $d_H(v_2)=d_H(v_3)=d_H(v_4)=2$,
$d_{G}(v_2)=d_{G}(v_3)=d_{G}(v_4)=n-3$. Furthermore, each edge incident to $v_2$ (each neighbor adjacent to $v_2$)
in $G$ belongs to an $S$-tree so that we can obtain $n-3$ $S$-trees. The same is true for the vertices $y$ and $z$.
Let $\mathcal {T}$ be a set of internally disjoint $S$-trees that contains as many $S$-trees as possible and $U=N_G(v_2)\cap N_G(v_3)\cap N_G(v_4)$. There
exist at most $|U|=n-5$ $S$-tree in $\mathcal {T}$ that contain at least one vertex in $U$. Next we show that there
exist at most one $S$-tree in $G\setminus U$ (See Figure 2 $(b)$). Suppose that there exist two internally disjoint
$S$-trees in $G\setminus U$. Since $d_{G\setminus U}(v_2)=d_{G\setminus U}(v_4)=2$, $v_2v_4$ must be in an $S$-tree,
say $T_{n-5}$. Then we must use one element of $\{v_1,v_5\}$ if we want to reach $v_3$ in $T_{n-5}$. This implies
that there exists at most one $S$-tree except $T_{n-5}$ in $G\setminus U$. So $\kappa_3(G)=|\mathcal {T}|\leq n-4$, a contradiction.

If $H$ is a cycle of order $4$, let $H=v_1,v_2,v_3,v_4$(See Figure 2 $(c)$), and $S=\{v_1,v_2,v_3\}$.
Since $d_H(v_1)=d_H(v_2)=d_H(v_3)=2$, $d_{G}(v_1)=d_{G}(v_2)=d_{G}(v_3)=n-3$. Furthermore, each edge incident to $v_1$ in $G$ belongs to an $S$-tree so that we can obtain $n-3$ $S$-trees. The same is true for
the vertices $v_2$ and $v_3$. Let $\mathcal {T}$ be a set of internally disjoint $S$-trees that contains as many $S$-trees as possible and
$U=N_G(v_2)\cap N_G(v_3)\cap N_G(v_4)$. There exist at most $|U|=n-4$ $S$-trees in $\mathcal {T}$ that contain at
least one vertex in $U$. It is obvious that $G\setminus U$ is disconnected, and we will show that there exists no $S$-tree
in $G\setminus U$(See Figure 2 $(c)$). So $\kappa_3(G)=|\mathcal {T}|\leq n-4$, a contradiction. .

Otherwise, $H$ is a path order $4$ and $K_n[M]$ has another component. By Claim 1, the component must be an edge, denoted by
$P_2=u_1u_2$. Let $H=P_4=v_1,v_2,v_3,v_4$(See Figure 3 $(a)$) and $S=\{v_2,v_3,u_1\}$. Since $d_H(v_2)=d_H(v_3)=2$, we have $d_{G}(v_2)=d_{G}(v_3)=n-3$. Furthermore, each edge incident to $v_2$ (each neighbor adjacent to $v_2$) in $G$ belongs to an $S$-tree so that we can obtain $n-3$ $S$-trees. The same is true for the vertex $v_3$. Let $\mathcal {T}$ be a set of internally disjoint $S$-trees that contains as many $S$-trees as possible and $U$ be the vertex set whose elements are adjacent to both of $v_2$, $v_3$ and $u_1$. There exist at most $|U|=n-6$ $S$-trees in $\mathcal {T}$ that contain at least one vertex in $U$. Next we show that there exist
at most two $S$-trees in $G\setminus U$. Suppose that there exist $3$ internally disjoint $S$-trees in $G\setminus U$.
Since $d_{G\setminus U}(v_2)=d_{G\setminus U}(v_3)=3$, each edge incident to $v_2$ (each neighbor adjacent to $v_2$) in $G$ belongs to an
$S$-tree so that we can obtain $3$ $S$-trees. The same is true for the vertex $v_3$. This implies that $v_2u_2$ belongs to an $S$-trees,
denoted by $T_1$, and $v_3u_2$ belongs to an $S$-trees, denoted by $T_2$. Clearly, $T_1=T_2$. Otherwise, $u_2\in T_1\cap T_2$, which
contradicts to that $T_1$ and $T_2$ are internally disjoint $S$-trees. Then $v_2u_2,v_3u_2\in E(T_{1})$.
If we want to form $T_{1}$, we need the vertex $v_1$ or $v_4$. Without loss of generality, let $v_1\in V(T_{1})$.
It is easy to see that there exists exactly one $S$-tree except $T_{1}$ in $G\setminus U$ (See Figure 3 $(b)$), which implies
that $\kappa_3(G)\leq n-4$. So $\kappa_3(G)=|\mathcal {T}|\leq n-4$, a contradiction.

\textbf{Claim 3.}~~If $H$ is a component of $K_n[M]$ of order $3$, then $K_n[M]=C_3\cup P_2$ or $K_n[M]=P_3\cup P_2$.

By the similar arguments to the claims above, we can deduce the claim.

\begin{figure}[h,t,b,p]
\begin{center}
\scalebox{0.8}[0.8]{\includegraphics{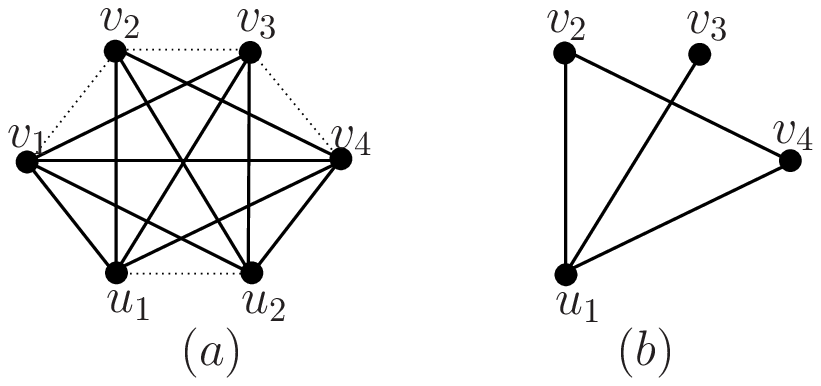}}\\
Figure 3 Graphs for Claim 2(The dotted lines stands for edges in $M$).
\end{center}
\end{figure}

From the arguments above, we can conclude that $G$ is a graph obtained from the complete graph $K_n$ by deleting an edge set $M$ such
that $K_n[M]=P_4$ or $K_n[M]=P_3\cup P_2$ or $K_n[M]=C_3\cup P_2$ or $K_n[M]=r P_2(2\leq r\leq \lfloor\frac{n}{2}\rfloor)$.

\emph{Necessity.}  We show that $\kappa_3(G)\geq n-3$ if $G$ is a graph obtained from the complete graph $K_n$ by deleting an edge
set $M$ such that $K_n[M]=P_4$ or $K_n[M]=P_3\cup P_2$ or $K_n[M]=C_3\cup P_2$ or $K_n[M]=r P_2( 2\leq r\leq \lfloor\frac{n}{2}\rfloor)$.
We consider the following cases:

\textbf{Case 1.}~~$K_n[M]=r P_2(2\leq r\leq \lfloor\frac{n}{2}\rfloor)$.

In this case, $M$ is a matching of $K_n$. We only need to prove that $\kappa_3(G)\geq n-3$ when $M$
is a maximum matching of $K_n$. Let $S=\{x,y,z\}$. Since $|S|=3$, $S$ contains at most a pair of adjacent
vertices under $M$.

If $S$ contains a pair of adjacent vertices under $M$, denoted by $x$ and $y$, then the
trees $T_i=w_ix\cup w_iy\cup w_iz$ together with $T_1=xy\cup yz$ form $n-3$ pairwise internally disjoint trees
connecting $S$, where $\{w_1,w_2,\cdots,w_{n-4}\}=V(G)\setminus \{x,y,z,z'\}$ such that $z'$ is the adjacent vertex
of $z$ under $M$ if $z$ is $M$-saturated, or $z'$ is any vertex in $V(G)\setminus \{x,y,z\}$ if $z$ is $M$-unsaturated.
If $S$ contains no pair of adjacent vertices under $M$, then the trees $T_i=w_ix\cup w_iy\cup w_iz$
together with $T_1=yx\cup xy'\cup y'z$ and $T_2=yx'\cup zx'\cup zx$ and $T_3=zy\cup yz'\cup z'x$ form $n-3$ pairwise
edge-disjoint $S$-trees, where $\{w_1,w_2,\cdots,w_{n-6}\}=V(G)\setminus \{x,y,z,x',y',z'\}$, $x',y',z'$ are the
adjacent vertices of $x,y,z$ under $M$, respectively, if $x,y,z$ are all $M$-saturated, or one of $x',y',z'$ is any
vertex in $V(G)\setminus \{x,y,z\}$ if the vertex is $M$-unsaturated.

From the arguments above , we know that $\kappa(S)\geq n-3$ for $S\subseteq V(G)$. Thus $\kappa_3(G)\geq n-3$.
From this together with Theorem 1, we know $\kappa_3(G)=n-3$.

\textbf{Case 2.}~~$K_n[M]=C_3\cup P_2$ or $K_n[M]=P_3\cup P_2$.

If $\kappa_3(G)\geq n-3$ for $K_n[M]=C_3\cup P_2$, then $\kappa_3(G)\geq n-3$ for $K_n[M]=P_3\cup P_2$.
So we only consider the former. Let $C_3=v_1,v_2,v_3$ and $P_2=u_1u_2$, and let $S=\{x,y,z\}$ be a $3$-set of $G$.
If $S=V(C_3)$, then there exist $n-3$ pairwise internally disjoint $S$-trees since each vertex in $S$ is adjacent
to each vertex in $G\setminus S$. Suppose $S\neq V(C_3)$.

If $|S\cap V(C_3)|=2$, without loss of generality, assume that $x=v_1$ and $y=v_2$. When $S\cap V(P_2)\neq \emptyset$, say $z=u_1$,
the trees $T_i=w_ix\cup w_iy\cup w_iz$ together with $T_{n-4}=xz\cup yz$ and $T_{n-3}=xu_2\cup u_2v_3\cup zv_3\cup u_2y$
form $n-3$ pairwise internally disjoint trees connecting $S$, where $\{w_1,w_2,\cdots,w_{n-5}\}=V(G)\setminus \{x,y,z,u_2,v_3\}$.
When $S\cap V(P_2)=\emptyset$, the trees $T_i=w_ix\cup w_iy\cup w_iz$ together with $T_{n-3}=xz\cup zy$ are $n-3$ pairwise
internally disjoint trees connecting $S$, where $\{w_1,w_2,\cdots,w_4\}=V(G)\setminus \{x,y,z,v_3\}$.

If $|S\cap V(C_3)|=1$, without loss of generality, assume $x=v_1$. When $|S\cap V(P_2)|=2$, say $y=u_1$ and $z=u_2$,
the trees $T_i=w_ix\cup w_iy\cup w_iz$ together with $T_{n-4}=xz\cup v_2z\cup v_2y$ and $T_{n-3}=xy\cup yv_3\cup zv_3$
form $n-3$ pairwise internally disjoint trees connecting $S$, where $\{w_1,w_2,\cdots,w_{n-5}\}=V(G)\setminus \{x,y,z,v_2,v_3\}$.
When $S\cap V(P_2)=1$, say $u_1=y$, the trees $T_i=w_ix\cup w_iy\cup w_iz$ together with $T_{n-5}=xz\cup zy$ and
$T_{n-4}=xu_2\cup u_2v_2\cup v_2y\cup v_2z$ and $T_{n-3}=xz\cup zv_3\cup v_3y$ are $n-3$ pairwise internally disjoint
trees connecting $S$, where $\{w_1,w_2,\cdots,w_{n-6}\}=V(G)\setminus \{x,y,z,v_2,v_3,u_2\}$.
When $|S\cap V(P_2)|=\emptyset$, the trees $T_i=w_ix\cup w_iy\cup w_iz$ together with $T_{n-4}=xz\cup zy$ and
$T_{n-3}=xy\cup yv_3\cup zv_3$ form $n-3$ pairwise internally disjoint $S$-trees, where
$\{w_1,w_2,\cdots,w_{n-5}\}=V(G)\setminus \{x,y,z,v_2,v_3\}$.

If $S\cap V(C_3)=\emptyset$, when $|S\cap V(P_2)|=0$ or $|S\cap V(P_2)|=2$, the trees $T_i=w_ix\cup w_iy\cup w_iz$
form $n-3$ pairwise internally disjoint $S$-trees, where $\{w_1,w_2,\cdots,w_{n-3}\}=V(G)\setminus \{x,y,z\}$.
When $S\cap V(P_2)=1$, say $u_1=x$, the trees $T_i=w_ix\cup w_iy\cup w_iz$ together with $T_{n-3}=xz\cup zy$ form
$n-3$ pairwise internally disjoint $S$-trees, where $\{w_1,w_2,\cdots,w_{n-4}\}=V(G)\setminus \{x,y,z,u_2\}$.

From the arguments above , we conclude that $\kappa(S)\geq n-3$ for $S\subseteq V(G)$. Thus $\kappa_3(G)\geq n-3$.
From this together with Theorem 1, it follows that $\kappa_3(G)=n-3$.

\textbf{Case 3.}~~$K_n[M]=P_4$.

This case can be proved by an argument similar to Cases 1 and 2.
\end{proof}

\end{document}